\newtheorem{stat}{Proposition}[section]
\newtheorem{ex}[stat]{Example}
\newtheorem{thr}[stat]{Theorem}
\newtheorem{hyp}[stat]{Conjecture}
\newtheorem{quest}[stat]{Question}
\numberwithin{equation}{section}
\theoremstyle{definition}
\newtheorem{defn}[stat]{Definition}
\def\R{\mathbb{R}}
\def\M{{\cal M}}
\def\S{{\cal S}}
\def\trop{rk_t}
\def\Kap{rk_K}
\def\Kf{\textbf{K}}
\def\Kmat{\mathcal K}
\begin{document}
\renewcommand{\abstractname}{}
\renewcommand{\refname}{Bibliography}
\renewcommand{\proofname}{Proof}
%\emergencystretch=20pt
%\tableofcontents\newpage
%\section*{О тропических матрицах с различными тропическим рангом и рангом Капранова}

%\section*{Example of a $6$-by-$6$ Matrix with Different Tropical and Kapranov Ranks}

%\begin{center}
%\bf{Example of a $6$-by-$6$ Matrix with Different Tropical and Kapranov Ranks}
%\end{center}

\begin{center}
\bf{\textsc{EXAMPLE OF A 6-BY-6 MATRIX WITH DIFFERENT TROPICAL AND KAPRANOV RANKS}}
\end{center}

%\smallskip

\begin{center}
\textsc{yaroslav shitov}
\end{center}

\medskip

%\begin{abstract}
%\hangindent=1.3cm \hangafter=0
%\hangindent=-1.3cm \hangafter=0
\parshape=10
1.3cm 11.1cm 1.3cm 11.1cm 1.3cm 11.1cm 1.3cm 11.1cm 1.3cm 11.1cm 1.3cm 11.1cm 1.3cm 11.1cm 1.3cm 11.1cm 1.3cm 11.1cm 1.3cm 11.1cm
\textsc{Abstract.} We provide an example of a $6$-by-$6$ matrix $A$ such that $rk_t(A)=4$, $rk_K(A)=5$.
This answers a question asked by M.~Chan, A.~Jensen, and E.~Rubei.

\smallskip

\parshape=10
1.3cm 11.1cm 1.3cm 11.1cm 1.3cm 11.1cm 1.3cm 11.1cm 1.3cm 11.1cm 1.3cm 11.1cm 1.3cm 11.1cm 1.3cm 11.1cm 1.3cm 11.1cm 1.3cm 11.1cm
\textsc{Keywords}: matrix theory, tropical semiring, tropical rank, Kapranov rank.
%\end{abstract}

\smallskip

\parshape=10
1.3cm 11.1cm 1.3cm 11.1cm 1.3cm 11.1cm 1.3cm 11.1cm 1.3cm 11.1cm 1.3cm 11.1cm 1.3cm 11.1cm 1.3cm 11.1cm 1.3cm 11.1cm 1.3cm 11.1cm
\textit{Mathematics Subject Classification:} 15A03, 15A15.

\bigskip\medskip

%\begin{center}
%\textsc{1. Introduction}
%\end{center}

\section{Introduction}

We work over the \textit{tropical semiring} $(\mathbb{R},\oplus,\otimes)$ whose operations are
$$a\oplus b=\min\{a,b\},\mbox{ }a\otimes b=a+b.$$
We consider \textit{tropical matrices}, i.e. matrices over the tropical semiring.
There exist many different ways to define the rank of a tropical matrix, see~\cite{AGG,DSS}.
We deal with the notions of tropical rank and Kapranov rank, see also~\cite{CJR, KR}.

\begin{defn}\label{perm}
We define the \textit{permanent} of a tropical matrix $S\in\mathbb{R}^{n\times n}$ as
\begin{equation}\label{def1}perm(S)=\min\limits_{\sigma\in\S_n} \{s_{1,\sigma(1)}+\ldots+s_{n,\sigma(n)}\}.\end{equation}
\end{defn}

\begin{defn}\label{deftropdeg}
The matrix $S$ is called \textit{tropically singular} if the minimum in~(\ref{def1}) is attained at least twice.
Otherwise, $S$ is called \textit{tropically non-singular}.
\end{defn}

\begin{defn}\label{deftrop}
The \textit{tropical rank} of a matrix $M\in\mathbb{R}^{p\times q}$ is the largest integer $r$ such that
$M$ has a tropically non-singular $r$-by-$r$ submatrix. We denote the tropical rank of $M$ by $\trop(M)$.
\end{defn}

Let $\Kf$ denote the field whose elements are formal sums
$$a(t)=\sum_{i=1}^{\infty}a_it^{\alpha_i} \mbox{ such that } a_n\in\mathbb{C}, \alpha_n\in\mathbb{R}, \lim_{n\rightarrow\infty}\alpha_n=+\infty.$$
%Отображение $val$: \Kf\rightarrow\mathbb{R}\cup\{\infty\}$ переводит ряд $a$ в наименьший из показателей $\alpha_i$, т.е.
Let $deg: \Kf^*\rightarrow\mathbb{R}$ be a natural valuation sending $a(t)$ to the least of the exponents $\alpha_i$, i.e.
$deg(a)=\min_{n:a_n\neq0}\{\alpha_n\}.$ By definition, assume $deg(0)=\infty$.
We say that $B\in\Kf^{m\times n}$ is a \textit{lift} of $T\in\mathbb{R}^{m\times n}$ if $deg(b_{ij})=t_{ij}$ for any $i,j$.
The notion of the Kapranov rank of a matrix can be defined in the following way, see~\cite[Corollary 3.4]{DSS}.

\begin{defn}\label{defKap}
Let $M\in\mathbb{R}^{m\times n}$. We define the Kapranov rank of $M$ as
$$\Kap(M)=\min_{\Kmat_M}\{rank(\Kmat_M)\},$$
where the minimum is taken over all lifts of $M$. The expression $rank(\Kmat_M)$ means the
usual rank of a matrix $\Kmat_M$ over the field $\Kf$.
\end{defn}

The notion of Kapranov rank was deeply investigated in~\cite{CJR, DSS, KR}.
Develin, Santos, and Sturmfels in~\cite{DSS} show that $\Kap(M)\geqslant\trop(M)$ for every matrix $M$.
The following theorem points out the connection with matroids.

\begin{thr}\cite[Corollary 7.4]{DSS}\label{DSSthr}
Let $\M$ be a matroid which is not representable over $\mathbb{C}$.
Then the Kapranov and tropical ranks of the cocircuit matrix ${\cal C}(\M)$ are different.
\end{thr}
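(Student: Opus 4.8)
The plan is to reduce the statement to two facts about the cocircuit matrix and combine them with the inequality $\Kap\geqslant\trop$ quoted above. Write $E=\{1,\dots,n\}$ for the ground set of $\M$, let $d$ be its rank, and let $C_1,\dots,C_s$ be its cocircuits, so that each $H_j=E\setminus C_j$ is a hyperplane, i.e. a flat of rank $d-1$. I use the cocircuit matrix ${\cal C}(\M)\in\R^{s\times n}$ with $({\cal C}(\M))_{ji}=0$ when $i\in C_j$ and $({\cal C}(\M))_{ji}=1$ otherwise. It then suffices to establish (A) $\trop({\cal C}(\M))=d$ for every matroid $\M$, and (B) $\Kap({\cal C}(\M))=d$ if and only if $\M$ is representable over $\mathbb{C}$. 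Indeed, if $\M$ is not representable, then (A) gives $\trop({\cal C}(\M))=d$ while (B) gives $\Kap({\cal C}(\M))\neq d$, so the two ranks differ (and in fact $\Kap>\trop$ by the quoted inequality).

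For (A), I would first prove $\trop\geqslant d$. Fix a basis $B=\{b_1,\dots,b_d\}$ and, for each $k$, let $C_k$ be the fundamental cocircuit of $b_k$ with respect to $B$, which meets $B$ only in $b_k$. The $d$-by-$d$ submatrix on rows $C_1,\dots,C_d$ and columns $b_1,\dots,b_d$ then carries $0$ on the diagonal and $1$ off it, so its tropical permanent equals $0$ and is attained only by the identity permutation; the submatrix is tropically non-singular. For $\trop\leqslant d$ I would show that every $(d+1)$-by-$(d+1)$ submatrix is tropically singular. The key structural input is circuit–cocircuit orthogonality: a circuit and a cocircuit never meet in exactly one element. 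Any $d+1$ chosen columns form a dependent set and hence contain a circuit, and I expect to play this circuit against the cocircuits indexing the rows to exhibit two distinct permutations attaining the minimum of the permanent, ruling out non-singularity.

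For (B) the easy direction is $(\Leftarrow)$. Given vectors $v_1,\dots,v_n\in\mathbb{C}^d$ representing $\M$, choose for each $H_j$ a functional $\phi_j$ vanishing exactly on its span, so that $\phi_j(v_i)=0\iff i\in H_j$. Perturbing the columns to $v_i+tw_i$ with generic $w_i$ and setting $\Kmat_{ji}=\phi_j(v_i+tw_i)=\phi_j(v_i)+t\,\phi_j(w_i)$ yields a matrix over $\Kf$ whose $(j,i)$ entry has degree $0$ when $i\in C_j$ and degree $1$ when $i\in H_j$; thus $\Kmat$ is a lift of ${\cal C}(\M)$, and as $\Kmat=\Phi(V+tW)$ with $\Phi\in\Kf^{s\times d}$ it has rank at most $d$, giving $\Kap\leqslant d$. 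With (A) and $\Kap\geqslant\trop$ this forces $\Kap=d$. For $(\Rightarrow)$, starting from a rank-$d$ lift $\Kmat=\Phi V$ over $\Kf$, I would specialise $t\to0$: the prescribed degrees force $\langle\bar\phi_j,\bar v_i\rangle=0$ exactly when $i\in H_j$, so each hyperplane of $\M$ lies inside a linear hyperplane $\ker\bar\phi_j$ of the residue configuration $\{\bar v_i\}$ while its complementary cocircuit avoids it, and one argues that these coincidences pin the matroid of $\{\bar v_i\}\subset\mathbb{C}^d$ down to $\M$ itself. Since a $\Kf$-representation specialises to a $\mathbb{C}$-representation and conversely, this gives the desired equivalence.

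The main obstacle is the $(\Rightarrow)$ direction of (B): one must control the specialisation so that the residue vectors neither acquire extra dependencies nor drop rank, and then verify that matching every hyperplane genuinely forces the matroid to equal $\M$ rather than some weak-map image of it. Turning circuit–cocircuit orthogonality into the non-uniqueness of the minimizing permutation for every $(d+1)$-minor, needed for the bound $\trop\leqslant d$ in (A), is the second delicate point.
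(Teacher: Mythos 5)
First, a point of reference: the paper does not prove this statement at all --- it is quoted verbatim as \cite[Corollary 7.4]{DSS}, so the only comparison available is with the proof in [DSS]. Your decomposition into (A) $\trop({\cal C}(\M))=d$ and (B) $\Kap({\cal C}(\M))=d\iff\M$ is $\mathbb{C}$-representable, combined with $\Kap\geqslant\trop$, is exactly the architecture of the argument in [DSS] (their Theorems 7.2 and 7.3 yield Corollary 7.4 in precisely this way), and the two halves you do carry out --- $\trop\geqslant d$ via fundamental cocircuits of a basis, and the lift $\Kmat=\Phi(V+tW)$ giving $(\Leftarrow)$ of (B) --- are correct.

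The problem is that the two steps you flag as ``delicate points'' and leave open are precisely the ones that carry the logical weight of the corollary. To conclude that the ranks differ for a non-representable $\M$ you need (i) $\trop({\cal C}(\M))\leqslant d$ and (ii) $\Kap({\cal C}(\M))\leqslant d\Rightarrow\M$ representable; the parts you completed ($\trop\geqslant d$ and the $(\Leftarrow)$ of (B)) play at most an auxiliary role. For (i), your plan of playing a circuit in the $d+1$ chosen columns against circuit--cocircuit orthogonality does not obviously close: when the permanent of the $(d+1)$-minor is $0$ one can triangularize the unique matching of zero entries, but the degree-one row produced at each stage need not meet the circuit, and when the permanent is positive the unique optimal permutation uses $1$-entries and the orthogonality relation gives no direct handle; [DSS] in fact prove this bound by a different route, identifying the tropical convex hull of the cocircuits with the $(d-1)$-dimensional Bergman complex of $\M$ and invoking their dimension formula for tropical rank. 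For (ii), the residues $\bar v_i$ of the columns of a rank-$d$ factorization do \emph{not} automatically satisfy $\bar\phi_j(\bar v_i)=0$ for $i\in H_j$ (the degree of $\phi_j\cdot v_i$ being $\geqslant1$ says nothing about vanishing of a residue unless one first normalizes each row and column to degree $0$ and rules out degeneration of the leading terms); controlling this specialization, and then showing that matching all hyperplanes forces the matroid of $\{\bar v_i\}$ to equal $\M$ rather than a proper weak-map image, is the entire content of the hard direction. As it stands the proposal is an accurate roadmap of the known proof, not a proof.
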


Theorem~\ref{DSSthr} makes it possible to construct examples of matrices with different tropical and Kapranov ranks.
The example of a $7$-by-$7$ matrix with different ranks is provided in~\cite{DSS}.

Kim and Roush in~\cite{KR} mostly deal with algorithmical aspects of the Kapranov rank. They prove that
determining Kapranov rank of tropical matrices is NP-hard.
Also, in~\cite{KR} it was shown that there exist matrices of tropical rank
$3$ and arbitrarily high Kapranov rank.

The following theorem was proven in~\cite{CJR}.

\begin{thr}\cite[Corollary 1.5]{CJR}\label{CJRthr}
Let $M\in\R^{m\times n}$, $\min\{m,n\}\leqslant5$. Then $\Kap(M)=\trop(M)$.
\end{thr}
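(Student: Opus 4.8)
The plan is to prove the two inequalities separately. Since both $\trop$ and $\Kap$ are unchanged under transposition (tropical singularity of a submatrix is a property of its permanent ties, and the transpose of a lift is a lift of the transpose with the same degrees), I may assume $m=\min\{m,n\}\leqslant5$. The inequality $\trop(M)\leqslant\Kap(M)$ is the result of Develin, Santos and Sturmfels quoted above, so the entire content is the reverse bound. Writing $r=\trop(M)$, my task is to produce a lift $\Kmat_M\in\Kf^{m\times n}$ of $M$ whose ordinary rank over $\Kf$ is at most $r$; by Definition~\ref{defKap} this forces $\Kap(M)\leqslant r=\trop(M)$ and closes the argument.

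It is convenient to restate the goal geometrically. By Definition~\ref{deftrop}, $\trop(M)\leqslant r$ means exactly that every $(r+1)$-by-$(r+1)$ submatrix of $M$ is tropically singular, i.e.\ that $M$ lies in the common locus of the tropical hypersurfaces attached to all $(r+1)$-minors. On the other hand $\Kap(M)\leqslant r$ means that $M$ is the degree vector of a rank-$\leqslant r$ matrix over $\Kf$, i.e.\ that $M$ lies in the tropicalization of the classical determinantal variety $D_r$. The second locus is always contained in the first, and the statement to be proved is the opposite inclusion for $m\leqslant5$; equivalently, that the $(r+1)$-minors form a tropical basis of the ideal of $D_r$ in this range. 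The borderline values are harmless: for $r=m$ there is nothing to prove, and for $r=m-1$ one is dealing with the maximal minors, which are known to form a tropical basis. Thus the analysis concentrates on the middle ranks $2\leqslant r\leqslant m-2$, which for $m\leqslant5$ leaves only the pairs $(m,r)\in\{(4,2),(5,2),(5,3)\}$, the last being the sharp and hardest one. Note that one cannot argue from the minor size alone: by the result of Kim and Roush recalled above there are matrices of tropical rank $3$ with arbitrarily large Kapranov rank, so the $4$-by-$4$ minors are \emph{not} a tropical basis in general, and it is genuinely the restriction to at most five rows that does the work.

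For a matrix $M$ in one of these finitely many situations I would build the lift from the regular subdivision of $\Delta_{m-1}\times\Delta_{n-1}$ induced by $M$. Each maximal cell records, through the pairs of permutations realizing the tied tropical minors, a matroid; choosing the lowest-order coefficients of $\Kmat_M$ so that all the relevant classical $(r+1)$-minors vanish simultaneously amounts precisely to giving a rank-$r$ representation of that matroid over $\mathbb{C}$. Once admissible leading coefficients are fixed, completing them to an honest lift of rank $r$ over $\Kf$ by adjoining higher-order terms is a routine perturbation, since the added terms can only raise ranks and the degrees are already correct.

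The real obstacle, and the place where the hypothesis $m\leqslant5$ is indispensable, is the solvability of these coefficient equations, i.e.\ the $\mathbb{C}$-representability of every matroid that can appear in the subdivision. This is the same phenomenon isolated in Theorem~\ref{DSSthr}, where a matroid not representable over $\mathbb{C}$ is exactly what forces $\Kap$ strictly above $\trop$. For $m\leqslant5$ the matroids that arise are small enough to be representable, which I would verify by running through the finite list of combinatorial cell types for the three pairs above; I expect this representability check, especially in the case $(5,3)$, to be the longest and most delicate part of the argument. The bound is sharp: the first failure occurs at size $6$, as the matrix constructed in the present paper shows.
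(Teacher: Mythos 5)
First, a point of comparison that matters here: the paper does not prove this statement at all. It is imported verbatim as \cite[Corollary 1.5]{CJR}, so there is no internal argument to measure yours against; what you are attempting is a reconstruction of a paper-length external result. Your opening reductions are sound and do match how that result is organized: transposition invariance, the inequality $\trop(M)\leqslant \Kap(M)$ from \cite{DSS}, the disposal of $r\geqslant m-1$ (maximal minors are a tropical basis) and of $r\leqslant 2$ (the $3\times3$ minors are a tropical basis, also \cite{DSS}), leaving $(m,r)=(5,3)$ as the only genuinely new case --- which is precisely the main theorem of \cite{CJR}.

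The genuine gap is in the step you lean on to settle that case. You assert that choosing admissible leading coefficients for the lift ``amounts precisely to giving a rank-$r$ representation of a matroid over $\mathbb{C}$,'' so that the theorem reduces to a finite representability check over the cell types of the subdivision. That reduction is not established, and the present paper itself refutes it as a general principle: the matrix $A$ of Example~\ref{6-6} satisfies $\trop(A)=4\neq 5=\Kap(A)$ even though every matroid on at most six elements is representable over $\mathbb{C}$ (the introduction stresses, citing \cite{BCH}, that the rank gap of $A$ ``does not have a matroidal nature''). Non-representability of a matroid, as in Theorem~\ref{DSSthr}, is one sufficient source of obstruction, but not the only one: the constraints on the leading coefficients form a general system of polynomial equations together with non-vanishing conditions, and its solvability is not governed by matroid representability alone. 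So even a complete representability check of the matroids you describe could not close the $(5,3)$ case; the actual proof in \cite{CJR} proceeds by a direct and lengthy construction of rank-$3$ lifts for $5$-row matrices of tropical rank $3$. Since you also explicitly defer the verification you do propose (``I expect this \ldots to be the longest and most delicate part''), the proposal is an outline whose central step is both unexecuted and, as formulated, unsound. A secondary soft spot is the claim that completing admissible leading coefficients to an honest lift of rank $r$ is ``a routine perturbation'': this is exactly the lifting problem for points of the tropicalized determinantal variety, and it cannot be separated from the question being proved.
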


Chan, Jensen, and Rubei in~\cite{CJR} point out the connection with the notion of tropical basis.
They ask the following question.

\begin{quest}\cite[Question 1.1]{CJR}\label{hy1}
For which numbers $d$, $n$, and $r$ do the $(r+1)\times(r+1)$-minors of a d-by-n matrix form a tropical basis? Equivalently, for which
$d$, $n$, $r$ does every $d$-by-$n$ matrix of tropical rank at most $r$ have Kapranov rank at most $r$?
\end{quest}

In~\cite{CJR} the following conjecture was also made.

\begin{hyp}\cite[Conjecture 1.6]{CJR}\label{hy}
The $(r + 1)\times(r + 1)$ minors of a d-by-n matrix are a tropical basis if and only if either $r\leqslant2$ or $r\geqslant\min\{d,n\}-2$.
\end{hyp}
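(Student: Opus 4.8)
The title and abstract make clear that this paper \emph{refutes} Conjecture~\ref{hy} rather than proves it: the promised matrix $A$ is $6$-by-$6$ with $\trop(A)=4$ and $\Kap(A)=5$, so here $d=n=6$ and $r=4=\min\{d,n\}-2$. This triple lies in the range where the conjecture predicts that the $(r+1)$-by-$(r+1)=5$-by-$5$ minors \emph{do} form a tropical basis. By the reformulation in Question~\ref{hy1}, a tropical basis of these minors would force every $6$-by-$6$ matrix of tropical rank at most $4$ to have Kapranov rank at most $4$; since $A$ has tropical rank $4$ but Kapranov rank $5$, exhibiting $A$ contradicts the claim that $r\geqslant\min\{d,n\}-2$ suffices. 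Thus my plan is not to prove the statement but to build and certify such a counterexample, which is consistent with Theorem~\ref{CJRthr}, as the $6$-by-$6$ format is the smallest square one in which a gap can occur.

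First I would search for the candidate $A$ among tropicalizations of $6$-by-$6$ matrices encoding a small configuration that is not representable over $\mathbb{C}$, guided by Theorem~\ref{DSSthr}: the obstruction to lowering the Kapranov rank below $5$ should be exactly an algebraic dependence that a non-$\mathbb{C}$-representable structure forbids. The entries must be chosen delicately so that every $5$-by-$5$ window is tropically degenerate, while the global $6$-by-$6$ geometry still records the forbidden dependence; the DSS $7$-by-$7$ cocircuit example suggests the shape of the combinatorics, but a genuinely new choice is needed to compress it into six rows and columns.

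Given a candidate $A$, certifying $\trop(A)=4$ is the routine part. I would establish $\trop(A)\leqslant4$ by checking that all $\binom{6}{5}^2=36$ submatrices of size $5$, as well as $A$ itself, are tropically singular, i.e. the minimum in~(\ref{def1}) is attained at least twice, and then $\trop(A)\geqslant4$ by exhibiting one $4$-by-$4$ submatrix whose permanent minimum is attained uniquely. For the upper bound $\Kap(A)\leqslant5$ I would use that $A$, being tropically singular as a whole, lies in the tropicalization of the hypersurface $\det=0$; by the fundamental correspondence between tropical and classical varieties this yields a lift $\Kmat\in\Kf^{6\times6}$ with $\det\Kmat=0$, hence $rank(\Kmat)\leqslant5$.

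The crux is the lower bound $\Kap(A)\geqslant5$: I must show that \emph{no} lift of $A$ has rank $4$ or less over $\Kf$. I would assume a rank-$\leqslant4$ lift and factor it as $\Kmat=CD$ with $C\in\Kf^{6\times4}$ and $D\in\Kf^{4\times6}$. Matching valuations, the prescribed degrees satisfy $a_{ij}=deg\big(\sum_k C_{ik}D_{kj}\big)$, so each entry is either a cancellation-free tropical product or forces the leading coefficients of the factors to cancel in a prescribed pattern. Tracking which cancellations are compatible turns the existence of the lift into the solvability over $\mathbb{C}$ of a polynomial system on the leading coefficients, and the goal is to show that this system has no solution — precisely because a solution would give a $\mathbb{C}$-representation of the configuration chosen to be non-representable. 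Controlling this cancellation bookkeeping so that it collapses exactly onto the representability equations, with no slack that an adversarial lift could exploit, is where I expect essentially all of the difficulty to lie.
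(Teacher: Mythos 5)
You correctly identify the logical role of the statement: the paper refutes Conjecture~\ref{hy} by exhibiting a $6$-by-$6$ matrix with $\trop(A)=4$ and $\Kap(A)=5$, which via the equivalence in Question~\ref{hy1} kills the case $r=\min\{d,n\}-2$. Your certification of $\trop(A)=4$ matches the paper, and your non-constructive route to $\Kap(A)\leqslant5$ (the determinant of a square matrix is a single polynomial, hence a tropical basis, so a tropically singular matrix admits a singular lift) is a legitimate alternative to the paper's explicit lift $M_0$ whose rows sum to zero.

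The genuine gap is in your strategy for the crucial lower bound $\Kap(A)\geqslant5$, and in the search heuristic that feeds it. You propose to find $A$ among tropicalizations of configurations not representable over $\mathbb{C}$, and to certify $\Kap(A)\geqslant5$ by reducing the existence of a rank-$4$ lift $\Kmat=CD$ to a polynomial system whose unsolvability is ``precisely'' the non-representability of that configuration. This cannot work in the $6$-by-$6$ format: the paper explicitly notes (citing~\cite{BCH}) that every matroid on at most $6$ elements is representable over $\mathbb{C}$, so the gap between the two ranks here \emph{cannot} have a matroidal nature, and the Theorem~\ref{DSSthr} mechanism that produces the $7$-by-$7$ example is unavailable. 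Your cancellation bookkeeping would therefore not ``collapse onto representability equations''; there is no non-representable structure to collapse onto, and the plan stalls exactly at the step you flag as containing all the difficulty. The paper's actual argument is different and more direct: for an \emph{arbitrary} lift $H$ it expands two specific $5\times5$ minors, $H_{25}$ and $H_{61}$, isolates the common subexpression $\Delta=h_{33}h_{44}-h_{34}h_{43}$ with $\delta=deg(\Delta)$, and shows by a three-way case split ($\delta>1$, $\delta<1$, $\delta=1$) that the competing lowest-degree terms in at least one of these minors have distinct valuations, so that minor has finite degree and is nonzero. That valuation-collision analysis, not a representability obstruction, is the missing idea.
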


Also, in~\cite{CJR} it was asked whether there exists a $6$-by-$6$ matrix with different tropical and Kapranov ranks.
We answer this question by providing an example of a $6$-by-$6$ matrix with tropical rank $4$ and Kapranov rank $5$.

Now let us take into account the equivalence given in Question~\ref{hy1}.
Our example shows that the 5-by-5 minors of a 6-by-6 matrix are not a tropical basis.
Thus we disprove Conjecture~\ref{hy}.

Additionally, we note that the difference between the tropical and Kapranov ranks of our matrix does not have a matroidal nature.
Indeed, matroids with at most $6$ elements are all representable over $\mathbb{C}$, see~\cite{BCH}.

\section{The Example}

\begin{ex}\label{6-6}
Let $$A=\left(
\begin{array}{cccccc}
0 & 0 & 4 & 4 & 4 & 4 \\
0 & 0 & 2 & 4 & 1 & 4 \\
4 & 4 & 0 & 0 & 4 & 4 \\
2 & 4 & 0 & 0 & 2 & 4 \\
4 & 4 & 4 & 4 & 0 & 0 \\
2 & 4 & 1 & 4 & 0 & 0 \\
\end{array}
\right)
.$$
Then $\trop(A)=4$, $\Kap(A)=5$.
\end{ex}

\begin{proof}
1. Note that every $5$-by-$5$ submatrix
of $A$ can be written in some of the following forms (up to permutations of rows and columns):
$$S'=\left(
\begin{array}{cccccc}
0 & s'_{12} & s'_{13} & s'_{14} & s'_{15} \\
s'_{21} & 0 & 0 & s'_{24} & s'_{25} \\
s'_{31} & 0 & 0 & s'_{34} & s'_{35} \\
s'_{41} & s'_{42} & s'_{43} & 0 & 0 \\
s'_{51} & s'_{52} & s'_{53} & 0 & 0 \\
\end{array}
\right), \mbox{ }
S''=\left(
\begin{array}{cccccc}
0 & 4 & 4 & 4 & 4 \\
0 & x & 4 & y & 4 \\
s''_{31} & 0 & 0 & 4 & 4 \\
s''_{41} & 0 & 0 & z & 4 \\
s''_{51} & s''_{52} & s''_{53} & 0 & 0 \\
\end{array}
\right),$$
where $x,y,z\in\{1,2\}$, $s'_{ij},s''_{ij}\in\{1,2,4\}$.
By Definition~\ref{perm}, $perm(S')=0$. The minimum in~(\ref{def1}) for $S'$ is given by $id,(23)\in\S_5$.
Analogously, $perm(S'')=y$, the minimum is given by $(24), (243)\in\S_5$.
Thus by Definition~\ref{deftropdeg}, every $5\times 5$-submatrix of $A$ is tropically singular.
From Definition~\ref{deftrop} it follows that $\trop(A)\leqslant4$.

Now consider the $4$-by-$4$ submatrix which is formed by the $1$st, $2$nd, $4$th, and $6$th rows and
the $1$st, $4$th, $5$th, and $6$th columns of $A$:
$$
\left(
\begin{array}{cccccc}
0 & 4 & 4 & 4 \\
0 & 4 & 1 & 4 \\
2 & 0 & 2 & 4 \\
2 & 4 & 0 & 0 \\
\end{array}
\right).$$
The minimum in the expression for its permanent is given by the only permutation $(23)\in\S_4$.
Thus by Definition~\ref{deftrop}, $\trop(A)=4$.

2. Let us consider the matrix
$$M_0=\left(
\begin{array}{cccccc}
1 & 1 & t^4 & t^4 & t^4 & t^4 \\
-1 & -1 & t^2 & t^4 & t & t^4 \\
t^4 & t^4 & 1-t^2 & 1 & -t^4 & -t^4 \\
t^2 & t^4 & -1-t & -1 & t^2 & -t^4 \\
-t^4 & -t^4 & -t^4 & -t^4 & -1-t^2 & 1 \\
-t^2 & -t^4 & t & -t^4 & 1-t & -1 \\
\end{array}
\right)\in\Kf^{6\times6},$$
which is a lift of $A$. The sum of the rows of $M_0$ is the zero row, so that
the rank of $M_0$ is at most $5$. Thus by Definition~\ref{defKap}, $\Kap(A)\leqslant 5$.

Now let $H\in\Kf^{6\times6}$ be an arbitrary lift of $A$.
It follows directly from definitions that $deg(ab)=deg(a)+deg(b)$, $deg(a+b)\geqslant\min\{deg(a),deg(b)\}$ for any $a,b\in\Kf$.
Since $deg(h_{pq})=a_{pq}$ for any $p,q$, we obtain the following expression for the minor $H_{25}$:
$$H_{25}=h_{12}h_{34}h_{41}h_{56}h_{63}+h_{12}h_{33}h_{44}h_{56}h_{61}-h_{12}h_{34}h_{43}h_{56}h_{61}+g_1,$$
where $deg(g_1)\geqslant4$. Analogously, the minor $H_{61}$ can be expressed as
$$H_{61}=h_{12}h_{25}h_{33}h_{44}h_{56}-h_{12}h_{25}h_{34}h_{43}h_{56}+g_2,\mbox{ }deg(g_2)\geqslant4.$$

We denote $\Delta=h_{33}h_{44}-h_{34}h_{43}$, $\delta=deg(\Delta)$. We obtain
$$H_{25}=h_{12}h_{34}h_{41}h_{56}h_{63}+h_{12}\Delta h_{56}h_{61}+g_1,\mbox{ }deg(h_{12}h_{34}h_{41}h_{56}h_{63})=3,$$
\begin{equation}\mbox{$ $ $ $ $ $ $ $ $ $ $ $ $ $ $ $ $ $  $ $ $ $ $ $ $ $ $ $ $ $ $ $ $ $  $ $ $ $ $ $ $ $ $ $ $ $ $ $ $ $  $ $ $ $ $ $ $ $ $ $ $ $ $ $ $ $ $ $ $ $ $ $ $ $ $ $ $ $ $ $ $ $  $ $ $ $ $ $ $ $ $ $ $ $ $ $ $ $  $ $ $ $ $ $ $ $ $ $ $ $ $ $ $ $  $ $ $ $ $ $ $ $ $ $ $ $}\label{eqex1}\mbox{ }deg(h_{12}\Delta h_{56}h_{61})=2+\delta;\end{equation}
\begin{equation}\label{eqex2}H_{61}=h_{12}h_{25}\Delta h_{56}+g_2,
\mbox{ }deg(h_{12}h_{25}\Delta h_{56})=1+\delta.\end{equation}

It follows from definitions that $deg(v_1+v_2)=\min\{deg(v_1),deg(v_2)\}$
for any $v_1,v_2\in\Kf$ such that $deg(v_1)\neq deg(v_2)$.
Thus if $\delta>1$, then from~(\ref{eqex1}) it follows that $deg(H_{25})=3$, i.e. $H_{25}\neq0$.
Analogously, if $\delta<1$, then $deg(H_{25})=2+\delta$, i.e. $H_{25}\neq0$.
Finally, if $\delta=1$, then from~(\ref{eqex2}) it follows that $deg(H_{61})=2$, i.e. $H_{61}\neq0$.
We see that some of the minors $H_{25}$ and $H_{61}$ differs from $0$. This shows that the rank
of $H$ is at least $5$. By Definition~\ref{defKap}, $\Kap(A)\geqslant5$.
The proof is complete.
\end{proof}

\begin{thr}\label{genc}
The matrix $A$ from Example~\ref{6-6} contains the least number of rows and the least number of columns
among tropical matrices $M$ such that $\Kap(M)\neq\trop(M)$.
\end{thr}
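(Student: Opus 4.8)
The plan is to deduce the statement directly from Theorem~\ref{CJRthr} together with the properties of $A$ already established in Example~\ref{6-6}. First I would recall that Example~\ref{6-6} exhibits a matrix $A\in\R^{6\times6}$ with $\trop(A)=4$ and $\Kap(A)=5$; thus $A$ is a concrete tropical matrix satisfying $\Kap(A)\neq\trop(A)$ and having exactly six rows and six columns. This shows that the minimum number of rows (respectively columns) among matrices with distinct ranks is at most $6$.

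Next I would invoke the contrapositive of Theorem~\ref{CJRthr}. That theorem asserts that any $M\in\R^{m\times n}$ with $\min\{m,n\}\leqslant5$ satisfies $\Kap(M)=\trop(M)$. Hence, whenever $M$ is a tropical matrix with $\Kap(M)\neq\trop(M)$, we must have $\min\{m,n\}\geqslant6$, so in particular both $m\geqslant6$ and $n\geqslant6$. Consequently $6$ is a lower bound for the number of rows of any such matrix, and likewise a lower bound for the number of columns.

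Combining the two observations, the value $6$ is simultaneously the least possible number of rows and the least possible number of columns for a tropical matrix whose tropical and Kapranov ranks differ, and the single matrix $A$ attains this bound in both directions. This gives the claim, under the natural reading of ``least number of rows and least number of columns'' as two separate minimizations, each bounded below by $6$ via Theorem~\ref{CJRthr} and each met by $A$.

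I do not expect a genuine obstacle here: all the substance resides in the inequality of Theorem~\ref{CJRthr}, proved in~\cite{CJR}, and in the explicit construction and verification carried out in Example~\ref{6-6}. The only point meriting care is the logical interpretation of the minimality statement, which the argument above pins down precisely.
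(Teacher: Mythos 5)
Your argument is correct and is precisely the reasoning the paper intends: the paper's own proof simply states that the theorem follows from Theorem~\ref{CJRthr} and Example~\ref{6-6}, and your write-up spells out exactly that deduction (the example gives the upper bound $6$ for both dimensions, the contrapositive of Theorem~\ref{CJRthr} gives the lower bound). No differences in approach and no gaps.
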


\begin{proof}
Follows from Theorem~\ref{CJRthr} and Example~\ref{6-6}.
\end{proof}

\section{Acknowledgements}
I would like to thank my scientific advisor Professor Alexander E. Guterman for constant attention to my work and fruitful discussions.

\textsc{Faculty of Algebra, Department of Mathematics and Me\-chanics, Moscow State University, GSP-1, 119991 Moscow, Rus\-sia.}

\textit{E-mail:} \verb"yaroslav-shitov@yandex.ru"
\end{document}